\theoremstyle{plain}
\newtheorem{theorem}                 {Theorem}      [section]
\newtheorem{proposition}  [theorem]  {Proposition}
\theoremstyle{definition}
\newtheorem{example}      [theorem]  {Example}
\newtheorem{definition}   [theorem]  {Definition}
\numberwithin{equation}{section}
\def \theo-intro#1#2 {\vskip .25cm\noindent{\bf Theorem #1\ }{\it #2}}
\def \zn{\mathbb Z}
\def \rn{\mathbb R}
\def \cn{\mathbb C}
\def \hn{\mathbb H}
\def \can{\mathbb C\text{a}}
\def \proc#1{\cn  \text{P}^{#1}}
\def \proh#1{\hn  \text{P}^{#1}}
\def\proca#1{\can \text{P}^{#1}}
\def \g{\mathfrak{g}}
\def \h{\mathfrak{h}}
\def \k{\mathfrak{k}}
\def \m{\mathfrak{m}}
\def \p{\mathfrak{p}}
\def \t{\mathfrak{t}}
\def \SO#1{\text{\bf SO}(#1)}
\def \so#1{\mathfrak{so}(#1)}
\def \Spin#1{\text{\bf Spin}(#1)}
\def \U#1{\text{\bf U}(#1)}
\def \US#1{\text{\bf U}^*(#1)}
\def \SU#1{\text{\bf SU}(#1)}
\def \su#1{\mathfrak{su}(#1)}
\def \Sp#1{\text{\bf Sp}(#1)}
\def \nab#1#2{\hbox{$\nabla$\kern -.3em\lower 1.0 ex
    \hbox{$#1$}\kern -.1 em {$#2$}}}
\begin{document}
\baselineskip 22pt \larger

\allowdisplaybreaks

\title{Harmonic morphisms from homogeneous\\ spaces of positive curvature}

\author{Sigmundur Gudmundsson}
\author{Martin Svensson}

\keywords{harmonic morphisms, Riemannian homogeneous spaces,
minimal submanifolds}

\subjclass[2000]{58E20, 53C43, 53C12}

\address
{Department of Mathematics, Faculty of Science, Lund University,
Box 118, S-221 00 Lund, Sweden}
\email{Sigmundur.Gudmundsson@math.lu.se}

\address
{Department of Mathematics \& Computer Science, University of
Southern Denmark, Campusvej 55, DK-5230 Odense M, Denmark}
\email{svensson@imada.sdu.dk}

\begin{abstract}
We prove local existence of complex-valued harmonic
morphisms from any Riemannian homogeneous space of positive
curvature, except the Berger space $\Sp 2/\SU 2$.
\end{abstract}

\maketitle

\section{Introduction}

In Riemannian geometry, the notion of a minimal submanifold of a
given ambient space is of great importance.  Complex-valued harmonic
morphisms $\phi:(M,g)\to\cn$ are useful tools for the construction
of such objects. They are solutions to
an over-determined non-linear system of partial differential
equations determined by the geometric data of the manifold
involved. For this reason harmonic morphisms are difficult to find
and have no general existence theory.  There even exist manifolds
not admitting any solutions, not even locally.

The current authors have shown in \cite{Gud-Sve-4} that any
Riemannian symmetric space $(G/H,g)$, that is neither
$G_2/\SO 4$ nor its non-compact dual, carries local
complex-valued harmonic morphisms.  Even global solutions
exist if the symmetric space is of non-compact type.
It is natural to extend the investigation to the more general
Riemannian homogenous spaces.  Many homogeneous spaces fibre
$G/K\to G/H$ over a symmetric space and such Riemannian
fibrations are well behaved with respect to harmonic morphisms
as we prove in Theorem \ref{theo-homo}.  Applying this
fact we obtain our main result.

\begin{theorem}
Let $(M,g)$ be a Riemannian homogeneous space of positive
curvature, which is {\it not} the Berger space $\Sp 2/\SU 2$.
Then $M$ admits local complex-valued harmonic morphisms.
\end{theorem}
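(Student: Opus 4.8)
The plan is to combine the classification of positively curved Riemannian homogeneous spaces with the fibration principle of Theorem \ref{theo-homo} and the symmetric-space result of \cite{Gud-Sve-4}. By the work of Berger, Wallach, Aloff--Wallach and B\'erard-Bergery, a simply connected homogeneous space of positive curvature is, up to the invariant metric, one of the following: a compact rank-one symmetric space ($S^n$, $\proc n$, $\proh n$ or $\proca 2$); one of the flag manifolds $\SU 3/T^2$, $\Sp 3/(\Sp 1)^3$ or $F_4/\Spin 8$; an Aloff--Wallach space $N_{k,l}=\SU 3/S^1_{k,l}$ with $k,l$ coprime; or one of the two Berger spaces $B^7=\Sp 2/\SU 2$ and $B^{13}=\SU 5/(\Sp 2\cdot \U 1)$. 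I would organise the proof according to this list.

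The rank-one symmetric spaces are covered immediately by \cite{Gud-Sve-4}, since none of them is $G_2/\SO 4$ or its non-compact dual. For the remaining non-symmetric spaces, apart from the Berger spaces, I would exhibit in each case a homogeneous fibration onto a rank-one symmetric space and appeal to Theorem \ref{theo-homo}. The diagonal maximal torus satisfies $T^2\subset \mathrm{S}(\U 2\times\U 1)$, so $\SU 3/T^2$ fibres over $\SU 3/\mathrm{S}(\U 2\times\U 1)=\proc 2$ with fibre $\proc 1$; likewise every circle $S^1_{k,l}$ lies in $\mathrm{S}(\U 2\times\U 1)$, giving a fibration $N_{k,l}\to \proc 2$ and thereby treating the whole Aloff--Wallach family at once. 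In the same way $\Sp 3/(\Sp 1)^3$ fibres over $\Sp 3/(\Sp 1\times\Sp 2)=\proh 2$, and $F_4/\Spin 8$ fibres over $F_4/\Spin 9=\proca 2$. In each instance one must check that the positively curved homogeneous metric makes the projection a Riemannian submersion with minimal fibres, so that Theorem \ref{theo-homo} lifts a local harmonic morphism on the base to one on the total space.

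The main obstacle is the pair of Berger spaces, and it is exactly here that the hypothesis of the theorem enters. For $B^7=\Sp 2/\SU 2$ the isotropy $\SU 2$ is embedded irreducibly and is a maximal connected subgroup, so no intermediate symmetric space exists over which $B^7$ could fibre; this is precisely the space that must be excluded, paralleling the role of $G_2/\SO 4$ in \cite{Gud-Sve-4}. The space $B^{13}=\SU 5/(\Sp 2\cdot \U 1)$ is the genuinely delicate case, and the crucial difference from $B^7$ is the toral factor $\U 1$ in the isotropy. I would exploit this circle to produce the required reduction: either by realising $B^{13}$ as the total space of a homogeneous fibration over a rank-one symmetric space of $\SU 5$, so that Theorem \ref{theo-homo} again applies, or, should no such fibration be available, by constructing an eigenfamily of complex-valued functions directly on $\SU 5$ that is invariant under $\Sp 2\cdot\U 1$ and hence descends to $B^{13}$. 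Verifying the hypotheses of Theorem \ref{theo-homo} for these specific positively curved metrics, and settling this final case, is where I expect the real work to lie.
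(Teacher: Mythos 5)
Your overall strategy---run through the classification of positively curved homogeneous spaces and, for every non-symmetric case other than $\Sp 2/\SU 2$, exhibit a homogeneous fibration onto a compact rank-one symmetric space and invoke Theorem \ref{theo-homo}---is exactly the paper's proof, and your reason for excluding $\Sp 2/\SU 2$ (the isotropy $\SU 2$ is a maximal compact proper subgroup of $\Sp 2$, so no intermediate fibration exists) is also the paper's. But as written your argument has a genuine gap: you leave the Berger space $B^{13}=\SU 5/(\Sp 2\cdot\U 1)$ unsettled, offering a fibration ``if one is available'' and otherwise an unconstructed eigenfamily. The fibration is available and is the first of your two options: writing the isotropy group as $\{\operatorname{diag}(zA,\bar z^{4}):A\in\Sp 2,\ z\in\U 1\}$ and using $\Sp 2\subset\SU 4$, one has the inclusions $\Sp 2\times_{\zn_2}\U 1\subset{\bf S}(\U 4\times\U 1)\subset\SU 5$, giving a homogeneous Riemannian fibration $\SU 5/(\Sp 2\times_{\zn_2}\U 1)\to\SU 5/{\bf S}(\U 4\times\U 1)=\proc 4$. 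With this observation $B^{13}$ is no harder than the Wallach or Aloff--Wallach cases; without it, your proof does not close.

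A secondary but real issue is that your case list is the classification up to diffeomorphism, whereas the theorem concerns Riemannian homogeneous spaces, so every positively curved invariant metric must be covered. This forces one to treat the non-symmetric normal homogeneous metrics on $\SU{n+1}/\SU n$, $\Sp{n+1}/\Sp n$, $\Sp{n+1}/(\Sp n\times S^1)$ and $\Spin 9/\Spin 7$ (diffeomorphic to spheres and to $\proc {2n+1}$, but not isometric to the symmetric models), as well as the normal metric on $(\SU 3\times\SO 3)/\U 2^*\cong W^7_{1,1}$; the paper disposes of all of these by the same fibration device, over $\proc n$, $\proh n$, $\proh n$, $S^8$ and (via a product-group construction) $\proc 2$ respectively. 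Finally, the verification you correctly flag---that the projection is a Riemannian submersion with totally geodesic, hence minimal, fibres for the positively curved metrics in question---is discharged uniformly by B\'erard-Bergery's Proposition \ref{prop:Bergery}, since all these metrics are of submersion type $t\langle,\rangle_{\p}+\langle,\rangle_{\m}$ adapted to the intermediate subgroup; this is the one lemma you cite but do not actually apply.
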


\section{Harmonic Morphisms}

Let $M$ and $N$ be two manifolds of dimension $m$ and $n$,
respectively. Then a Riemannian metric $g$ on $M$ gives rise
to the notion of a Laplacian on $(M,g)$ and real-valued harmonic
functions $f:(M,g)\to\rn$. This can be generalized to the concept
of a harmonic map $\phi:(M,g)\to (N,h)$ between Riemannian
manifolds being a solution to a semi-linear system of partial
differential equations.

\begin{definition}
A map $\phi:(M,g)\to (N,h)$ between Riemannian manifolds is
called a {\it harmonic morphism} if, for any harmonic function
$f:U\to\rn$ defined on an open subset $U$ of $N$ with
$\phi^{-1}(U)$ non-empty, the composition
$f\circ\phi:\phi^{-1}(U)\to\rn$ is a harmonic function.
\end{definition}

The following characterization of harmonic morphisms between
Riemannian manifolds is due to Fuglede and Ishihara,
see \cite{Fug-1},\cite{Ish}.

\begin{theorem}
A map $\phi:(M,g)\to (N,h)$ between Riemannian manifolds is a
harmonic morphism if and only if it is a horizontally (weakly)
conformal harmonic map.
\end{theorem}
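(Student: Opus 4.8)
The plan is to extract everything from the standard composition law for the Laplacian. For a smooth map $\phi:(M,g)\to(N,h)$ and a function $f$ on $N$ one has
$$\Delta^M(f\circ\phi)=\trace_g\,\nabla df(d\phi,d\phi)+df(\tau(\phi)),$$
where $\nabla df$ denotes the Hessian of $f$ on $N$, $\tau(\phi)$ the tension field of $\phi$, and $\trace_g$ the trace over an orthonormal frame $\{e_i\}$ on $M$. Both implications follow by interrogating this single identity.

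For the \emph{if} direction, assume $\phi$ is a horizontally weakly conformal harmonic map with dilation $\lambda$. Horizontal weak conformality means $d\phi$ maps the horizontal distribution conformally, with factor $\lambda$, onto $T_{\phi(x)}N$ and annihilates the vertical distribution; consequently, for any symmetric $2$-tensor $S$ on $N$ one has $\trace_g\,S(d\phi,d\phi)=\lambda^2\,\trace_h S$. Applying this to $S=\nabla df$ gives $\trace_g\,\nabla df(d\phi,d\phi)=\lambda^2\,\Delta^N f$. Since $\tau(\phi)=0$, the composition law reduces to $\Delta^M(f\circ\phi)=\lambda^2\,\Delta^N f$, so every harmonic $f$ pulls back to a harmonic $f\circ\phi$, and $\phi$ is a harmonic morphism.

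For the \emph{only if} direction, fix $x_0\in M$, set $y_0=\phi(x_0)$, and choose normal coordinates $(y^1,\dots,y^n)$ on $N$ centred at $y_0$, so the Christoffel symbols vanish at $y_0$ and harmonicity of $f$ at $y_0$ reads $\sum_\alpha\partial_\alpha^2 f(y_0)=0$. Writing $\phi^\alpha=y^\alpha\circ\phi$ and $T^{\alpha\beta}=g(\nabla\phi^\alpha,\nabla\phi^\beta)$, the composition law evaluated at $x_0$ becomes
$$\Delta^M(f\circ\phi)(x_0)=\sum_{\alpha,\beta}\partial_\alpha\partial_\beta f(y_0)\,T^{\alpha\beta}+\sum_\alpha\partial_\alpha f(y_0)\,\tau(\phi)^\alpha.$$
The analytical input is that near $y_0$ there exist harmonic functions realising an arbitrary gradient $(b_\alpha)$ together with an arbitrary traceless symmetric Hessian $(a_{\alpha\beta})$ at $y_0$; this follows from local solvability of $\Delta^N f=0$ with prescribed low-order Taylor data (Cauchy--Kovalevskaya in the analytic case, standard elliptic theory in general). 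Since $\phi$ is a harmonic morphism the left side vanishes for all such $f$, yielding $\sum a_{\alpha\beta}T^{\alpha\beta}+\sum b_\alpha\tau(\phi)^\alpha=0$ for every admissible pair $(a,b)$.

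Two specialisations conclude the proof. Taking $a=0$ and $b$ arbitrary gives $\tau(\phi)(x_0)=0$, and as $x_0$ was arbitrary $\phi$ is harmonic. Taking $b=0$ and letting $a$ range over all traceless symmetric matrices forces the symmetric tensor $T^{\alpha\beta}$ to be orthogonal to every traceless symmetric matrix, whence $T^{\alpha\beta}=\lambda^2\delta^{\alpha\beta}$ for some $\lambda^2\geq 0$; in the chosen normal coordinates this is precisely horizontal weak conformality with dilation $\lambda$. The main obstacle is the analytical step of producing harmonic test functions with prescribed $2$-jet, since the algebraic extraction from the composition law is then routine.
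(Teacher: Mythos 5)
The paper offers no proof of this theorem at all---it is quoted as the classical characterization of Fuglede \cite{Fug-1} and Ishihara \cite{Ish}---and your argument is a correct sketch of essentially their original proof: the composition law $\Delta^M(f\circ\phi)=\trace_g\,\nabla df(d\phi,d\phi)+df(\tau(\phi))$ for the easy direction, and harmonic test functions with prescribed gradient and trace-free Hessian for the converse, followed by the routine algebraic extraction of $\tau(\phi)=0$ and $g(\nabla\phi^\alpha,\nabla\phi^\beta)=\lambda^2\delta^{\alpha\beta}$. You correctly flag the one genuinely nontrivial ingredient, the existence of local harmonic functions with prescribed $2$-jet (trace-free Hessian plus arbitrary gradient); note that for merely smooth metrics this must come from the elliptic-theory route you mention (Cauchy--Kovalevskaya needs analyticity), and a careful statement and proof can be found in \cite{Bai-Woo-book}, so with that lemma granted your proof is complete and matches the cited sources' approach.
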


The next result of Baird and Eells gives the theory of
harmonic morphisms a strong geometric flavour and shows that the
case when $n=2$ is particularly interesting. The conditions
characterizing harmonic morphisms are then independent of
conformal changes of the metric on the surface $N^2$. For the
definition of {\it horizontal homothety} we refer to \cite{Bai-Woo-book}.

\begin{theorem}\cite{Bai-Eel}\label{theo:B-E}
Let $\phi:(M^m,g)\to (N^n,h)$ be a horizontally (weakly) conformal
map between Riemannian manifolds. If
\begin{enumerate}
\item[i)] $n=2$, then $\phi$ is harmonic if and only if $\phi$ has
minimal fibres at regular points,
\item[ii)] $n\ge 3$, then two of the following
conditions imply the other
\begin{enumerate}
\item $\phi$ is a harmonic map,
\item $\phi$ has minimal fibres at regular points,
\item $\phi$ is horizontally homothetic.
\end{enumerate}
\end{enumerate}
\end{theorem}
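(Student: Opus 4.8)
The plan is to reduce both parts to the \emph{fundamental tension field formula} for a horizontally (weakly) conformal map, and then to read off the two statements as purely algebraic consequences. Fix a regular point $x\in M$, so that $d\phi_x\neq 0$, and decompose $T_xM=\V_x\oplus\H_x$, where $\V_x=\ker d\phi_x$ is the vertical space (tangent to the fibre through $x$) and $\H_x=\V_x^\perp$ is the horizontal space. By horizontal conformality, $d\phi_x$ restricts to a conformal isomorphism $\H_x\to T_{\phi(x)}N$ with $\langle d\phi(X),d\phi(Y)\rangle_h=\lambda^2\,\langle X,Y\rangle_g$ for $X,Y\in\H_x$, where $\lambda>0$ is the dilation; in particular $\dim\H_x=n$, $\dim\V_x=m-n$, and $d\phi_x|_{\H_x}$ is injective, so that for a horizontal vector $W$ one has $d\phi(W)=0$ if and only if $W=0$. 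I would then compute the tension field $\tau(\phi)=\trace_g\nabla d\phi$ in an adapted orthonormal frame $\{V_1,\dots,V_{m-n},H_1,\dots,H_n\}$ with the $V_a$ vertical and the $H_\alpha$ horizontal, splitting the trace into a vertical and a horizontal contribution.

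The core of the argument is the derivation of the formula
\begin{equation*}
\tau(\phi)=-(m-n)\,d\phi(\mu^{\V})+(2-n)\,d\phi(\gradh{\ln\lambda}),
\end{equation*}
where $\mu^{\V}$ is the mean curvature vector of the fibres (a horizontal field) and $\gradh{\ln\lambda}$ is the horizontal part of the gradient of $\ln\lambda$. The vertical contribution is the easy half: since $d\phi(V_a)=0$, one has $(\nabla d\phi)(V_a,V_a)=-d\phi(\nabla_{V_a}V_a)$, and summing over $a$ and projecting onto $\H$ produces exactly the mean curvature term, the vertical part of $\nabla_{V_a}V_a$ being annihilated by $d\phi$. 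The horizontal contribution $\sum_\alpha(\nabla d\phi)(H_\alpha,H_\alpha)$ is the delicate part and is where I expect the main obstacle to lie: one must differentiate the conformality relation $\langle d\phi(X),d\phi(Y)\rangle_h=\lambda^2\langle X,Y\rangle_g$ along the horizontal frame, control the terms $\nabla^\phi_{H_\alpha}d\phi(H_\alpha)$ using the Levi-Civita connection of $N$, and show that after tracing everything collapses onto the single horizontal vector $\gradh{\ln\lambda}$ with coefficient $2-n$ (up to the conventions chosen for $\mu^{\V}$ and the sign of $\tau$). The ``$2$'' records the two derivatives falling on $\lambda^2$, while the ``$-n$'' records the trace over the $n$ horizontal directions; keeping the horizontal/vertical bookkeeping consistent is the only genuine work.

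Granting the formula, both assertions follow at once, using that $\mu^{\V}$ and $\gradh{\ln\lambda}$ are horizontal and that $d\phi|_{\H}$ is injective. For (i), when $n=2$ the coefficient $2-n$ vanishes and the formula reduces to $\tau(\phi)=-(m-2)\,d\phi(\mu^{\V})$; hence $\phi$ is harmonic, i.e.\ $\tau(\phi)=0$, if and only if $d\phi(\mu^{\V})=0$, which by injectivity is equivalent to $\mu^{\V}=0$, that is, to the fibres being minimal at regular points. For (ii), with $n\ge 3$, condition (a) reads $\tau(\phi)=0$, condition (b) reads $\mu^{\V}=0$, and condition (c) reads $\gradh{\ln\lambda}=0$. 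If (b) and (c) hold, both terms vanish and $\tau(\phi)=0$, giving (a). If (a) and (b) hold, then $(2-n)\,d\phi(\gradh{\ln\lambda})=0$; since $n\ne 2$ and $d\phi|_{\H}$ is injective, this forces $\gradh{\ln\lambda}=0$, i.e.\ (c). If (a) and (c) hold, then $(m-n)\,d\phi(\mu^{\V})=0$; at a point where $m>n$ this gives $\mu^{\V}=0$, i.e.\ (b) (the case $m=n$ being vacuous, as the fibres are then discrete). This exhausts the three implications and completes the proof.
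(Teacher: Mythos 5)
The paper does not prove this statement at all: it is quoted verbatim from Baird and Eells with the citation \cite{Bai-Eel}, so there is no internal proof to compare against. Your proposal is the standard proof of that cited result, and it is essentially correct: the fundamental tension-field identity
$\tau(\phi)=-(m-n)\,d\phi(\mu^{\mathcal V})+(2-n)\,d\phi\bigl(\operatorname{grad}_{\mathcal H}\ln\lambda\bigr)$
at regular points, together with the injectivity of $d\phi$ on the horizontal space, does yield both (i) and (ii) exactly as you describe; this is the argument found in Baird--Wood's monograph. Your identification of the horizontal trace as the ``delicate part'' is fair, but you should actually carry out that computation (differentiating $\langle d\phi(X),d\phi(Y)\rangle_h=\lambda^2\langle X,Y\rangle_g$ and tracing) rather than only announcing its outcome, since the whole theorem rests on the coefficients $-(m-n)$ and $(2-n)$ being exactly right.

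One genuine, though small, gap: your argument establishes the equivalences only on the open set of regular points, whereas harmonicity of $\phi$ means $\tau(\phi)=0$ on all of $M$. To close part (i) in the direction ``minimal fibres at regular points $\Rightarrow$ harmonic'' you must say why $\tau(\phi)$ also vanishes at critical points. This follows from a short continuity argument: on the interior of the critical set one has $\lambda\equiv 0$, hence $d\phi\equiv 0$, so $\phi$ is locally constant and $\tau(\phi)=0$ there; the remaining critical points lie in the closure of the regular set, where $\tau(\phi)=0$ by continuity of the tension field. The same remark is needed in (ii) when deducing (a) from (b) and (c). Also note that in your last implication the relevant hypothesis is $m>n$ at the point in question, which holds wherever the fibres are positive-dimensional; it is worth phrasing this as a pointwise statement rather than dismissing $m=n$ globally.
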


The following useful existence result was proven in \cite{Gud-Sve-4}.

\begin{theorem}\label{theo-symmetric}
Let $(M,g)$ be an irreducible Riemannian symmetric space which is
neither $G_2/\SO 4$ nor its non-compact dual.  Then $M$ carries local
complex-valued harmonic morphisms.  If $M$ is of non-compact type
then global solutions exist.
\end{theorem}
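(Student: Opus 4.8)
The plan is to reduce the construction of complex-valued harmonic morphisms to the algebraic problem of producing \emph{eigenfamilies} of complex-valued functions, and then to exhibit such a family on each space in Cartan's classification. Recall from the Fuglede--Ishihara characterization that a complex-valued map $\phi:(M,g)\to\cn$ is a harmonic morphism exactly when it is harmonic and horizontally conformal, i.e.
$$\tau(\phi)=0\qquad\text{and}\qquad\kappa(\phi,\phi)=0,$$
where $\tau$ is the tension field (here the Laplace--Beltrami operator, since $\cn$ is flat) and $\kappa(\phi,\psi)=g(\nabla\phi,\nabla\psi)$ is the complex-bilinear conformality operator. I call a finite set $\{\phi_1,\dots,\phi_k\}$ an eigenfamily if there exist $\lambda,\mu\in\cn$ with $\tau(\phi_j)=\lambda\phi_j$ and $\kappa(\phi_i,\phi_j)=\mu\,\phi_i\phi_j$ for all $i,j$.

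First I would establish the reduction lemma: if $\{\phi_1,\dots,\phi_k\}$ is an eigenfamily, then every quotient $\Phi=(\sum_i a_i\phi_i)/(\sum_j b_j\phi_j)$ is a harmonic morphism on the open set where its denominator is nonzero. Since $\tau$ and $\kappa$ are respectively linear and bilinear, the numerator $P$ and denominator $Q$ again satisfy $\tau(P)=\lambda P$, $\tau(Q)=\lambda Q$ and $\kappa(P,Q)=\mu PQ$; substituting the product and quotient rules obeyed by $\tau$ and $\kappa$ then cancels the $\lambda$- and $\mu$-terms identically, yielding $\tau(\Phi)=0$ and $\kappa(\Phi,\Phi)=0$. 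This is a short direct computation, and it shows that one eigenfamily with two independent members already produces a whole M\"obius family of local solutions.

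The substantial work is then to exhibit an eigenfamily on each irreducible symmetric space. For $M=G/K$ of compact type I would use the Cartan embedding $G/K\hookrightarrow G$, $gK\mapsto\sigma(g)g^{-1}$, together with a suitable representation $\rho$ of $G$: because $\sigma$ fixes $K$, the entries $\phi_{w,w'}(gK)=\ip{\rho(\sigma(g)g^{-1})w}{w'}$ descend to $M$ and are eigenfunctions of the Laplace--Beltrami operator with $\lambda$ governed by $\rho$, so the harmonicity half of the eigenfamily condition comes for free. The content lies in choosing $\rho$ and the vectors $w,w'$ so that the gradients become \emph{isotropic} relative to the complexified isotropy representation, which is precisely what forces $\kappa(\phi_i,\phi_j)=\mu\,\phi_i\phi_j$. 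For the non-compact dual one transfers the same family through the common complexification via the Weyl unitary trick; since a space of non-compact type is diffeomorphic to $\rn^n$ and the relevant eigenfunctions can be arranged to be nowhere zero, the quotients are then globally defined, which accounts for the stronger conclusion in that case.

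I expect the main obstacle to be the uniform verification of the isotropy (conformality) condition across the whole of Cartan's list, and especially on the exceptional spaces. The classical series should admit transparent families built from genuine matrix entries, but the exceptional symmetric spaces must be handled through their minimal representations, where the existence of an isotropic eigenfamily is a delicate representation-theoretic fact rather than a computation. The space $G_2/\SO 4$ is singled out precisely because its smallest faithful representation, combined with its rank-two restricted root structure, leaves no room for the required null family; confirming that every other space does carry one, while this single case genuinely resists the construction, is the crux of the argument.
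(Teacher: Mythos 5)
There is a genuine gap, and it sits exactly where you locate it yourself. Your reduction lemma is correct: with $\tau(P)=\lambda P$, $\tau(Q)=\lambda Q$ and $\kappa(P,Q)=\mu PQ$ the product and quotient rules do cancel identically, so an eigenfamily yields local harmonic morphisms. But that lemma is the easy half. The entire content of the theorem is that an eigenfamily (or some other source of solutions) actually exists on \emph{every} irreducible Riemannian symmetric space in Cartan's classification except $G_2/\SO 4$ and its dual, and this you do not establish: for the classical series the families ``should'' come from matrix entries, for the exceptional spaces you defer to ``a delicate representation-theoretic fact'', and the exclusion of $G_2/\SO 4$ is explained by a heuristic about its smallest representation rather than by an argument. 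A proof whose self-declared crux is ``confirming that every other space does carry one'' has not discharged that crux. The global statement for non-compact type is likewise only asserted; you would need the denominator of your quotient to be nowhere zero on the dual, and ``can be arranged'' is not a verification.

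For comparison, the paper itself offers no proof of this theorem: it quotes it from \cite{Gud-Sve-4}, where the route is quite different from yours. There a symmetric space of non-compact type is realized, via the Iwasawa decomposition, as a solvable Lie group with a left-invariant metric; left-invariant conformal foliations with minimal leaves of codimension two are produced on these solvable groups by an analysis of the restricted root systems (this is where $G_2/\SO 4$ genuinely drops out, and where the globally defined solutions on non-compact type come from), and the compact case is then obtained by the duality between a symmetric space and its non-compact dual, which transports local harmonic morphisms through the common complexification. Your eigenfamily strategy is a legitimate alternative that has been carried out for many classical families in the literature, but as written it is a programme, not a proof.
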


For the general theory of harmonic morphisms, we refer to the
standard reference \cite{Bai-Woo-book} and the regularly updated
on-line bibliography \cite{Gud-bib}.

\section{The homogeneous Riemannian fibrations}

Let $G$ be a Lie group,  $K\subset H$ be compact subgroups of $G$
and $\k,\h,\g$ be their Lie algebras, respectively.  Then we have
the homogeneous fibration $$\pi:G/K\to G/H,\ \ \ \pi:aK\mapsto aH,$$
with fibres diffeomorphic to $H/K$.  Let $\m$ be an
$\text{Ad}(H)$-invariant complement of $\h$ in $\g$ and $\p$ be an
$\text{Ad}(K)$-invariant complement of $\k$ in $\h$.
Then $\p\oplus\m$ is an $\text{Ad}(K)$-invariant complement of $\k$ in $\g$.

Let $<,>_{\m}$ be an $\text{Ad}(H)$-invariant scalar product
on $\m$ inducing a $G$-invariant Riemannian metric $g$ on $G/H$.
Further let $<,>_{\p}$ be an $\text{Ad}(K)$-invariant scalar product
on $\p$ defining a $H$-invariant Riemannian metric $\bar g$ on $H/K$.
Then the orthogonal sum $$<,>=<,>_{\m}+<,>_{\p}$$ on $\m\oplus\p$
defines a $G$-invariant Riemannian metric $\hat g$ on $G/K$. For this
situation we have the following well-known result due to
B\' erard-Bergery, see \cite{B-B-1} or \cite{Bes}.

\begin{proposition}\label{prop:Bergery}
The homogeneous projection $\pi:(G/K,\hat g)\to (G/H,g)$ is a Riemannian
submersion with totally geodesic fibres.
\end{proposition}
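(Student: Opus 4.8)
The plan is to establish the two assertions separately, in each case transporting the computation to the base point $o=eK$ by means of the transitive isometric actions available. Throughout I identify $T_o(G/K)$ with $\q=\p\oplus\m$ and $T_{eH}(G/H)$ with $\m$.

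For the submersion statement, observe first that under these identifications $d\pi_o$ is nothing but the projection $\p\oplus\m\to\m$ with kernel $\p$; thus the vertical space at $o$ is $\p$, the horizontal space is $\m$, and $d\pi_o$ restricts to the identity on $\m$. Since $\hat g_o=\langle\cdot,\cdot\rangle_\p+\langle\cdot,\cdot\rangle_\m$ is the orthogonal sum and $g_{eH}=\langle\cdot,\cdot\rangle_\m$, this restriction is a linear isometry, so $\pi$ is a Riemannian submersion at $o$. To reach an arbitrary point $aK$, I would use that the translations $\tau_a\colon bK\mapsto abK$ and $\sigma_a\colon bH\mapsto abH$ are isometries of $(G/K,\hat g)$ and $(G/H,g)$ satisfying $\pi\circ\tau_a=\sigma_a\circ\pi$; differentiating this identity at $o$ exhibits $d\pi_{aK}$ as $d\pi_o$ conjugated by linear isometries, whence the submersion property at $o$ passes to $aK$.

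For the fibres, note that $\pi^{-1}(aH)=\tau_a(H/K)$ is the image of the fibre $F=H/K$ through $o$ under the isometry $\tau_a$, so it suffices to prove that $F$ is totally geodesic. As $H$ acts transitively on $F$ through the ambient isometries $\tau_h$ $(h\in H)$, the second fundamental form of $F$ is determined by its value at $o$, and I only need to show that this vanishes. Here the point is that for $U,V\in\p=T_oF$ the one-parameter subgroups $\exp(tU)$ lie in $H$ and hence preserve $F$, so the fundamental vector fields $U^{*},V^{*}$ they generate are everywhere tangent to $F$; consequently the second fundamental form at $o$ is the $\m$-component (i.e. the component normal to $F$) of $(\nabla_{U^{*}}V^{*})_o$, where $\nabla$ is the Levi-Civita connection of $\hat g$.

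Finally I would compute $(\nabla_{U^{*}}V^{*})_o$ from the Koszul formula for the invariant metric on the reductive space $(G/K,\q)$, which yields $(\nabla_{U^{*}}V^{*})_o=-\tfrac12[U,V]_\q+\Lambda(U,V)$, where $\Lambda$ is the symmetric bilinear map given by $2\langle\Lambda(U,V),Z\rangle=\langle[Z,U]_\q,V\rangle+\langle[Z,V]_\q,U\rangle$ for all $Z\in\q$. The heart of the matter is to see that both terms are vertical. The bracket term is harmless, since $\h=\k\oplus\p$ is a subalgebra, so $[U,V]\in\h$ and its $\q$-component lies in $\p$. For the symmetric term I would test it against an arbitrary $X\in\m$: the $\Ad(H)$-invariance of $\m$ gives $[\h,\m]\subseteq\m$, hence $[X,U],[X,V]\in\m$ for $U,V\in\p\subseteq\h$, and since $\m\perp\p$ both pairings $\langle[X,U]_\q,V\rangle$ and $\langle[X,V]_\q,U\rangle$ vanish. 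Thus $\Lambda(U,V)\perp\m$, so $\Lambda(U,V)\in\p$, and therefore $(\nabla_{U^{*}}V^{*})_o\in\p$ is vertical. Its normal component is zero, the second fundamental form of $F$ vanishes at $o$, and $F$ is totally geodesic. The main obstacle is precisely this last verticality check for $\Lambda$; everything else is transport by isometries, and the computation hinges on the $\Ad(H)$-invariance of $\m$ together with the orthogonality of the two summands of the metric.
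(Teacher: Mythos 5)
Your proof is correct. The paper itself offers no proof of this proposition---it is stated as a well-known result of B\'erard-Bergery with references to \cite{B-B-1} and \cite{Bes}---and your argument is exactly the standard one found in those sources: equivariance under the translations reduces both claims to the base point, the orthogonal-sum structure of $\hat g$ together with $g_{eH}=\langle\cdot,\cdot\rangle_{\m}$ gives the submersion property, and the only substantive computation is the verticality of the two terms in the Levi-Civita formula at $o$, which you correctly obtain from $\h$ being a subalgebra (for the bracket term) and from $[\h,\m]\subseteq\m$, the infinitesimal form of the $\Ad(H)$-invariance of $\m$, combined with $\p\perp\m$ (for the symmetric term $\Lambda$).
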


As an immediate consequence we have the following useful result.

\begin{proposition}\label{prop-homo-lift}
Let $\pi:(G/K,\hat g)\to (G/H,g)$ be a homogeneous Riemannian
fibration. Further let $\phi:(G/H,g)\to (N,h)$ be a map to a
Riemannian manifold and $\hat\phi:(G/K,\hat g)\to(N,h)$ be the
composition $\hat\phi=\phi\circ\pi$.  Then $\phi$ is a harmonic
morphism if and only if $\hat\phi$ is a harmonic morphism.
\end{proposition}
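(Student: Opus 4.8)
The plan is to reduce everything to the Fuglede--Ishihara characterization (the theorem attributed to them above): a map is a harmonic morphism precisely when it is a horizontally (weakly) conformal harmonic map. It therefore suffices to show separately that $\phi$ is horizontally conformal if and only if $\hat\phi$ is, and that $\phi$ is harmonic if and only if $\hat\phi$ is. Both transfers will follow from Proposition \ref{prop:Bergery}, which tells us that $\pi$ is a Riemannian submersion with totally geodesic fibres. Throughout I fix, at a point $aK\in G/K$, the orthogonal decomposition of $T_{aK}(G/K)$ into the vertical space $\V_\pi=\ker d\pi$ (tangent to the fibre $H/K$) and its horizontal complement $\H_\pi$, on which $d\pi$ restricts to a linear isometry onto $T_{aH}(G/H)$.

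For the horizontal conformality, observe that since $\hat\phi=\phi\circ\pi$ we have $d\hat\phi=d\phi\circ d\pi$, so $\V_\pi\subset\ker d\hat\phi$; consequently the whole fibre of $\pi$ is mapped to a point by $\hat\phi$ and the horizontal distribution of $\hat\phi$ is contained in $\H_\pi$. In fact the kernel of $d\hat\phi$ is the orthogonal sum of $\V_\pi$ with the horizontal lift of $\ker d\phi$, so the horizontal space of $\hat\phi$ is exactly the horizontal lift of the horizontal space of $\phi$. Because $d\pi$ is an isometry from $\H_\pi$ onto $T(G/H)$, the map $\hat\phi$ scales the metric on its horizontal space by the same factor that $\phi$ scales the metric on its own horizontal space. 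Hence $\hat\phi$ is horizontally conformal if and only if $\phi$ is, with dilations related by $\hat\lambda=\lambda\circ\pi$.

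For the harmonicity I would use the standard composition formula for the tension field,
$$\tau(\hat\phi)=d\phi(\tau(\pi))+\trace\,\nabla d\phi(d\pi\,\cdot\,,d\pi\,\cdot\,).$$
Since the fibres of $\pi$ are totally geodesic they are in particular minimal, so the Riemannian submersion $\pi$ is harmonic and $\tau(\pi)=0$. To evaluate the remaining trace I choose a local orthonormal frame on $G/K$ adapted to the fibration, consisting of vertical fields $\{V_i\}$ spanning $\V_\pi$ and horizontal fields $\{X_a\}$ that are the horizontal lifts of a local orthonormal frame $\{e_a\}$ on $G/H$. The vertical fields satisfy $d\pi(V_i)=0$ and so contribute nothing, while $d\pi(X_a)=e_a$; therefore
$$\tau(\hat\phi)=\sum_a\nabla d\phi(e_a,e_a)=\tau(\phi)\circ\pi.$$
Because $\pi$ is surjective, $\tau(\hat\phi)=0$ if and only if $\tau(\phi)=0$, that is, $\hat\phi$ is harmonic if and only if $\phi$ is. Combining this with the horizontal conformality and the Fuglede--Ishihara theorem yields the desired equivalence.

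The two trace computations are routine; the one point that needs genuine care is the decomposition $\ker d\hat\phi=\V_\pi\oplus(\text{horizontal lift of }\ker d\phi)$, since it is this identification that guarantees the horizontal space of $\hat\phi$ sits inside $\H_\pi$ and that the dilation factor is transported without distortion by the isometry $d\pi|_{\H_\pi}$. Once this is in place, the totally geodesic, hence minimal, nature of the fibres supplied by Proposition \ref{prop:Bergery} does the remaining work by forcing $\tau(\pi)$ to vanish.
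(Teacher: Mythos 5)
Your proposal is correct, but it takes a genuinely different and more self-contained route than the paper. The paper's own proof is a two-line citation: it invokes Proposition 1.1 of \cite{Gud-4} (to the effect that precomposition with a surjective harmonic morphism preserves and reflects the harmonic morphism property) together with the observation that, by Proposition \ref{prop:Bergery} and Theorem \ref{theo:B-E}, the projection $\pi$ is itself a harmonic morphism, being a Riemannian submersion (hence horizontally homothetic) with totally geodesic (hence minimal) fibres. You instead unwind the Fuglede--Ishihara characterization and verify the two halves directly: horizontal (weak) conformality transfers in both directions because $\ker d\hat\phi=\ker d\pi\oplus(\text{horizontal lift of }\ker d\phi)$ and $d\pi$ restricts to an isometry on the horizontal distribution of $\pi$, while harmonicity transfers in both directions because the composition law yields $\tau(\hat\phi)=\tau(\phi)\circ\pi$ once $\tau(\pi)=0$, which follows from minimality of the fibres and surjectivity of $\pi$. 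In effect you have reproved the cited result of \cite{Gud-4} in the case at hand. What the paper's route buys is brevity; what yours buys is a self-contained argument that makes visible exactly where the hypotheses enter: only minimality (not full total geodesy) of the fibres is needed for $\tau(\pi)=0$, and the Riemannian-submersion property guarantees the dilation of $\phi$ pulls back undistorted as $\hat\lambda=\lambda\circ\pi$. Both arguments are sound.
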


\begin{proof}
The statement is an immediate consequence of Proposition 1.1 of
\cite{Gud-4} and the fact that by Theorem 2.3, every Riemannian
submersion with totally geodesic fibres is a harmonic morphism.
\end{proof}

\begin{theorem}\label{theo-homo}
Let $\pi:(G/K,\hat g)\to (G/H,g)$ be a homogeneous Riemannian
fibration.  If the base $G/H$ is a symmetric space, neither $G_2/\SO 4$
nor its non-compact dual, then the homogeneous space $G/K$ admits
local complex-valued harmonic morphisms.  If $G/H$ is of non-compact
type then global solutions exist.
\end{theorem}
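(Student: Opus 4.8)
The plan is to reduce the statement to the two results already available: the existence theorem for symmetric spaces, Theorem \ref{theo-symmetric}, and the lifting property of homogeneous Riemannian fibrations, Proposition \ref{prop-homo-lift}. The idea is to build a complex-valued harmonic morphism on the base $G/H$ and then pull it back along $\pi$ to the total space $G/K$; since $\pi$ is by Proposition \ref{prop:Bergery} a Riemannian submersion with totally geodesic fibres, this pullback will again be a harmonic morphism.

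Concretely, I would first apply Theorem \ref{theo-symmetric} to the symmetric base $(G/H,g)$, which by hypothesis is neither $G_2/\SO 4$ nor its non-compact dual, to obtain a complex-valued harmonic morphism $\phi:U\to\cn$ on some non-empty open subset $U\subseteq G/H$, and a globally defined $\phi:G/H\to\cn$ in the case that $G/H$ is of non-compact type. I would then set $\hat\phi=\phi\circ\pi$, which is defined on the non-empty open set $\pi^{-1}(U)\subseteq G/K$, respectively on all of $G/K$. By Proposition \ref{prop-homo-lift}, the lift $\hat\phi$ is a harmonic morphism precisely because $\phi$ is one. This yields the desired local, respectively global, solution on $G/K$ and finishes the argument.

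The one point requiring care is that Theorem \ref{theo-symmetric} is phrased for irreducible symmetric spaces. When $G/H$ is reducible I would pass to its de Rham decomposition into a Euclidean factor and irreducible factors: since $G/H$ is not the excluded space, it admits a factor carrying a harmonic morphism --- a Euclidean factor through a linear projection onto $\rn^2=\cn$, or an irreducible factor through Theorem \ref{theo-symmetric} --- and composing with the orthogonal projection onto that factor, itself a harmonic morphism as a Riemannian submersion with totally geodesic fibres, supplies the required $\phi$ on $G/H$. Beyond this bookkeeping I expect no genuine obstacle: the substantive geometric input is already contained in Proposition \ref{prop:Bergery}, and the theorem then follows by a single composition, with the local/global dichotomy inherited directly from the base.
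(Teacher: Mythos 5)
Your argument is exactly the paper's proof: the authors dispose of the theorem in one line as a direct consequence of Proposition \ref{prop-homo-lift} and Theorem \ref{theo-symmetric}, precisely the composition $\hat\phi=\phi\circ\pi$ you describe. Your additional paragraph on the reducible case via the de Rham decomposition is a sensible piece of bookkeeping that the paper silently omits (Theorem \ref{theo-symmetric} is indeed stated only for irreducible symmetric spaces), but it does not change the route.
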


\begin{proof}
The statement is a direct consequence of Proposition \ref{prop-homo-lift}
and Theorem \ref{theo-symmetric}.
\end{proof}

\section{The homogeneous spaces of positive curvature}

A simply connected normal Riemannian homogeneous space of
positive sectional curvature is either a symmetric space $S^n$,
$\proc n$, $\proh n$, $\proca 2$ of rank 1 or one of the following
$$\SU{n+1}/\SU n,\ \Sp{n+1}/\Sp n,\ \Spin 9/\Spin 7,$$
$$\Sp{n+1}/\Sp n\times S^1,\ \Sp 2/\SU 2,$$
$$\SU 5/\Sp 2\times S^1,\ (\SU 3\times\SO 3)/\U 2^*\cong W^7_{1,1}.$$
The first four displayed spaces are not symmetric but diffeomorphic to
$S^{2n+1}$, $S^{4n+3}$, $S^{15}$ and $\proc {2n+1}$, respectively.
Berger gave a classification of the diffeomorphism types in \cite{Ber},
but missed out the last space on the list.  This was later added by
Wilking in \cite{Wil}.

In \cite{Wal}, Wallach dropped the condition of normality and
gave a general classification for the even dimensional case.
With this he added to the list by constructing interesting
families of invariant metrics on the homogeneous spaces
$$W^6=\SU 3/{\bf S}(\U 1\times\U 1\times\U 1),$$
$$W^{12}=\Sp 3/(\Sp 1\times\Sp 1\times\Sp 1),$$
$$W^{24}=F_4/\Spin 8.$$

A classification for the general odd dimensional case was given
by B\' erard-Bergery in \cite{B-B-2}.  He showed that the list had
been completed earlier by Aloff and Wallach in \cite{Alo-Wal}.
They added the following infinite family of 7-dimensional examples
$$W^7_{k,l}=\SU 3/T^1_{k,l}\ \ \text{where}
\ \ k,l\in\zn\ \ \text{and}\ \ \text{gcd}(k,l)=1.$$

\section{The normal spaces}

The normal homogeneous space $\SU{n+1}/\SU n$, $\Sp{n+1}/\Sp n$,
$\Sp{n+1}/\Sp n\times S^1$ and $\Spin 9/\Spin 7$ fibre over the
symmetric $\proc n$, $\proh n$ and $S^8$ via the inclusions
$$\SU n\hookrightarrow{\bf S}(\U n\times\U 1)\hookrightarrow\SU{n+1},$$
$$\Sp n\hookrightarrow\Sp n\times\Sp 1\hookrightarrow\Sp{n+1},$$
$$\Sp n\times S^1\hookrightarrow\Sp n\times\Sp 1\hookrightarrow\Sp{n+1},$$
$$\Spin 7\hookrightarrow\Spin 8\hookrightarrow \Spin 9.$$
A good reference for this is Wallach's paper \cite{Wal}.
The embedding $\SU 2\hookrightarrow\Sp 2$ is {\it not} the standard
one that can be found in Cartan's classification for symmetric spaces.
Our methods do {\it not} work for this case since $\SU 2$ sits in
$\Sp 2$ as a maximal compact proper subgroup. For the details we
recommend \cite{Wil} and \cite{Eli} where the sectional curvatures
are determined.

\begin{example}
Let the special unitary group $\SU 5$ be equipped with its
standard bi-invariant Riemannian metric induced by the Killing
form of its Lie algebra $\su 5$.  Let
$\Sp 2\times_{\zn_2}\U 1$ be the subgroup of $\SU 5$ given by
$$\{\begin{pmatrix} zA & 0  \\ 0 & \bar z^4 \end{pmatrix}|\
A\in\Sp 2, z\in\U 1\}.$$  Then we have the sequence of group
monomorphisms
$$\Sp 2\times_{\zn_2}\U 1\hookrightarrow
{\bf S}(\U 4\times\U 1)\hookrightarrow\SU 5$$
inducing the homogeneous Riemannian fibration
$$\SU 5/(\Sp 2\times_{\zn_2}\U 1)\to\SU 5/{\bf S}(\U 4\times\U 1).$$
This is a harmonic morphism over the symmetric
complex projective space $\proc 4$.
\end{example}

For Example \ref{exam-last} we need the following general discussion.
Let $A,B$ be Riemannian Lie groups and $\alpha:C\to A$, $\beta:C\to B$ be
group homomorphisms from the Lie group $C$.  Let $C^*$ be the
subgroup of $\alpha(C)\times\beta(C)$ which is the image
of $(\alpha, \beta):C\to A\times B$.  Then the map
$$\phi:(A\times B)/C^*\to(A\times B)/(\alpha(C)\times\beta(C))$$
is a homogeneous Riemannian fibration and hence a
harmonic morphism. The epimorphism
$$\psi:(A\times B)/(\alpha(C)\times\beta(C))
=A/\alpha(C)\times B/\beta(C)\to A/\alpha(C)$$
is clearly a harmonic morphism, so the same applies to the composition
$$\psi\circ\phi:(A\times B)/C^*\to A/\alpha(C).$$

\begin{example}\label{exam-last}

We define a continuous family of left-invariant Riemannian metrics
on the Lie group $\SU 3\times\SO 3$.  With $t\in\rn^+$ this is given
by the scalar products
$$<,>_{t}=t<,>_{\su 3}+<,>_{\so 3}$$
on the Lie algebra $\su 3\oplus\so 3$, where $<,>_{\su 3}$ and
$<,>_{\so 3}$ are the negative Killing forms of $\su 3$ and
$\so 3$, respectively.  Let $C=\U 2$ be  the unitary group,
$\alpha:\U 2\to\SU 3$ be the group homomorphim with
$$\alpha(A)=\begin{pmatrix} A & 0  \\ 0 & \det A^{-1} \end{pmatrix}$$
and $\beta:\U 2\to\SU 2\to\SO 3$ be the canonical epimorphism.
Then
$$\alpha(\U 2)={\bf S}(\U 2\times \U 1),\ \ \beta(\U 2)=\SO 3$$
and $\U 2^*$ is the subgroup of $\alpha(\U 2)\times\beta(\U 2)$
which is the image of $$(\alpha, \beta):\U 2\to\SU 3\times\SO 3.$$
The above construction gives a homogeneous Riemannian fibration
$$(\SU 3\times\SO 3)/\U 2^*\to\SU 3/\text{\bf S}(\U 2\times\U 1).$$
Here the metric on the symmetric space
$\cn P^2=\SU 3/\text{\bf S}(\U 2\times\U 1)$ is a
constant multiple of the standard Fubini-Study metric.
\end{example}

\section{The Wallach spaces}

It follows from Wallach's construction of the spaces $W^6$,
$W^{12}$, $W^{24}$ that they fibre over the symmetric spaces
$\proc 2$, $\proh 2$ and $\proca 2$, respectively.  We shall here
describe the complex case and refer the reader to the original
paper \cite{Wal} for the rest. We also recommend Ziller's
excellent survey \cite{Zil}.

The Lie algebra $\k$ of the maximal torus
$K={\bf S}(\U 1\times\U 1\times\U 1)$ of the special unitary group
$\SU 3$ is given by
$$\k=\{\begin{pmatrix} \alpha & 0 & 0
\\ 0 & \beta & 0 \\ 0 & 0 & \gamma
\end{pmatrix}\in \su 3|\ \alpha+\beta+\gamma=0\}.$$
We have a natural splitting $\su 3=\k\oplus\p\oplus\m$, where
$$\p=\{\begin{pmatrix} 0 & 0 & 0 \\ 0 & 0
& z \\ 0 & -\bar z & 0\end{pmatrix}\in\su 3|\ z\in\cn\}$$
and
$$\m=\{\begin{pmatrix} 0 & z & w \\ -\bar z & 0
& 0 \\ -\bar w & 0 & 0\end{pmatrix}\in\su 3|\ z,w\in\cn\}.$$
Then $\h=\k\oplus\p$ is the Lie algebra of the subgroup
$$H={\bf S}(\U 1\times \U 2)
=\{\begin{pmatrix} \det A^{-1} & 0  \\ 0 & A
\end{pmatrix}\in\SU 3|\ A\in\U 2\}$$
and $\p\oplus\m$ is an $\text{Ad}(K)$-invariant complement of $\k$.
With $t\in\rn^+$ we define a family of
Euclidean scalar products on $\p\oplus\m$ with
$$<,>_{t}=t<,>_{\p}+<,>_{\m},$$ where $<,>_{\p}$
and $<,>_{\m}$ are the negative Killing forms
restricted to $\m$ and $\p$, respectively.  We now have the
inclusions
$${\bf S}(\U 1\times\U 1\times\U 1)
\hookrightarrow{\bf S}(\U 1\times\U 2)
\hookrightarrow\SU 3.$$
This provides us with a family of homogeneous Riemannian fibrations
$$\SU 3/{\bf S}(\U 1\times\U 1\times\U 1)
\to\SU 3/{\bf S}(\U 1\times\U 2)=\proc 2$$
over the symmetric $\proc 2$.  The Wallach spaces $W^6$ of positive
curvature are all contained in this family.

\section{The Aloff-Wallach spaces}

For elements $k,l\in\zn$ with $k^2+l^2\neq 0$ let
$T^1_{k,l}$ be the circle subgroup of $\SU 3$ given by
$$T^1_{k,l}=\{\begin{pmatrix} z^k & 0 & 0 \\ 0 & z^l
& 0 \\ 0 & 0 & \bar z^{k+l}\end{pmatrix}\in\SU 3|\ z\in\U 1\}.$$
Then $T^1_{k,l}$ is clearly also a subgroup of
$$H={\bf S}(\U 1\times \U 2)=\{\begin{pmatrix} \det A^{-1} & 0  \\ 0 & A
\end{pmatrix}\in\SU 3|\ A\in\U 2\}.$$

We have a decomposition $\su 3=\t_{k,l}\oplus\p\oplus\m$
orthogonal with respect to the Killing form of $\su 3$
where $\t_{k,l}$ and $\h=\t_{k,l}\oplus\p$ are the Lie
algebras of $T^1_{k,l}$ and $H$, respectively.  With $t\in\rn^+$
we define a family of $\text{Ad}(K)$-invariant scalar products on
$\p\oplus\m$ with $$<,>_{t}=t<,>_{\p}+<,>_{\m},$$ where $<,>_{\p}$
and $<,>_{\m}$ are the negative Killing form
restricted to $\m$ and $\p$, respectively. Now the inclusions
$$T^1_{k,l}\hookrightarrow
{\bf S}(\U 1\times \U 2)
\hookrightarrow\SU 3$$
give a family of homogeneous Riemannian fibrations
$$W^7_{k,l}=\SU 3/T^1_{k,l}\to\SU 3/{\bf S}(\U 1\times \U 2)=\proc 2$$
over the symmetric complex projective plane.  The homogeneous
Aloff-Wallach spaces of positive curvature are characterized
by $\text{gcd}(k,l)=1$ and they are all contained
in the family described above, see \cite{Alo-Wal}.
Wilking has pointed out that  $W^7_{1,1}\cong(\SU 3\times\SO 3)/\US 2$.

\section{Acknowledgements}
The second author was supported by the Danish Council for Independent Research under the project {\it Symmetry Techniques in Differential Geometry}.

\end{document}